\newtheorem{theorem}{Theorem}[section]
\newtheorem{proposition}[theorem]{Proposition}
\newtheorem{corollary}[theorem]{Corollary}
\theoremstyle{definition}
\newtheorem{definition}[theorem]{Definition}
\numberwithin{equation}{section}
\begin{document}

\baselineskip=15.5pt

\title[Holomorphic geometric structures on Oeljeklaus--Toma manifolds]{Holomorphic
geometric structures on Oeljeklaus--Toma manifolds}

\author[I. Biswas]{Indranil Biswas}

\address{Department of Mathematics, Shiv Nadar University, NH91, Tehsil Dadri,
Greater Noida, Uttar Pradesh 201314, India}

\email{indranil.biswas@snu.edu.in, indranil29@gmail.com}

\author[S. Dumitrescu]{Sorin Dumitrescu}

\address{Universit\'e C\^ote d'Azur, CNRS, LJAD, France}

\email{dumitres@unice.fr}

\subjclass[2020]{53A15, 53C23, 57S20, 14D10}

\keywords{Oeljeklaus--Toma manifold, geometric structure, Cartan geometry, Killing vector field}

\date{}

\begin{abstract}
We prove that any holomorphic geometric structure of affine type on an Oeljeklaus--Toma manifold is
locally homogeneous. For locally conformally K\"ahler Oeljeklaus--Toma manifolds we prove that all holomorphic
geometric structures, and also all holomorphic Cartan geometries, on them are locally homogeneous.
\end{abstract}

\maketitle

\section{Introduction}

The Oeljeklaus--Toma manifolds (which are simply referred to as $\text{OT}$--manifolds) form an important class of compact complex non-K\"ahler manifolds. 
They were constructed by Oeljeklaus and Toma in \cite{OT} using tools from algebraic number theory and generalizing the construction 
of Inoue surfaces in class ${\rm VII}_0$ \cite{In}. Oeljeklaus--Toma manifolds provide a counterexample to Vaisman's conjecture 
\cite{Va2}, which is a conjecture on locally conformally K\"ahler geometry that was open for 25 years.

The geometry of Oeljeklaus--Toma manifolds was systematically studied in the last two decades (see, for instance, \cite{APV,BO, Ka, 
Ka2,OV, OVV, PV, Ve}). In particular, it was proved that $\text{OT}$--manifolds share many of the geometric features of Inoue surfaces 
of class ${\rm VII}_0$ : they are non-K\"ahler flat affine manifolds \cite{OT}, they are of algebraic dimension zero (meaning that 
all meromorphic functions are constants) \cite{BO}, they contain no holomorphic curves \cite{Ve} and they are known to be 
solvmanifolds admitting no Vaisman metrics \cite{Ka2}. Moreover, locally conformally K\"ahler $\text{OT}$--manifolds do not admit proper 
complex subvarieties \cite{OV}.

Let us also mention that, among other results, it was proved in \cite{OT} that the canonical bundle of an $\text{OT}$--manifold and 
its tensor powers are flat. This result was extended in \cite{APV} to all line bundles.

Our article studies holomorphic geometric structures, and also holomorphic Cartan geometries, on Oeljeklaus--Toma manifolds.

By construction, any $\text{OT}$--manifold $M$ admits a standard complex affine structure \cite{OT}, which endows $M$ with a
torsion-free holomorphic flat affine connection \cite{Ko}. This is an example of holomorphic geometric structure in Gromov's sense \cite{Gr, 
DG} and can also be seen as a flat holomorphic Cartan geometry \cite{Sh}.
 
Our main results in this context are the following:
 
\begin{theorem}\label{thm 1 introduction} 
Let $M$ be an $\text{OT}$--manifold, and let $\phi$ be a holomorphic geometric structure
of affine type on $M$. Then $\phi$ is locally homogeneous. Moreover, the Lie algebra of local holomorphic affine vector fields on $M$
preserving $\phi$ is transitive.
\end{theorem}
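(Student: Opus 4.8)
The plan is to exploit the standard flat holomorphic affine connection $\nabla_0$ carried by $M$ (the connection underlying the complex affine structure coming from the OT construction). Since $\phi$ is of affine type, the pair $(\phi,\nabla_0)$ is a rigid holomorphic geometric structure in the sense of Gromov: a holomorphic affine connection is itself rigid of order two, and adjoining a holomorphic tensor preserves rigidity. A local holomorphic vector field preserves $(\phi,\nabla_0)$ precisely when it is simultaneously affine for $\nabla_0$ and a local symmetry of $\phi$; these are exactly the local holomorphic affine vector fields preserving $\phi$ appearing in the statement. I would first record this reduction and then apply Gromov's open--dense orbit theorem to conclude that the sheaf of such local Killing fields is transitive on a dense open subset $M^\circ \subseteq M$, whose complement $\Sigma := M \setminus M^\circ$ is a proper analytic subset.

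Next I would pass to the universal cover $\widetilde M = \mathbb{H}^s \times \mathbb{C}^t$, an open subset of $\mathbb{C}^{s+t}$ on which $\nabla_0$ lifts to the restriction of the standard flat connection. An affine vector field for a flat connection satisfies the linear holonomic system $\nabla_0^2 X = 0$, so on the simply connected $\widetilde M$ every local affine symmetry of $\widetilde\phi$ extends to a global holomorphic affine vector field, and preservation of $\widetilde\phi$ propagates by analytic continuation. Thus the local Killing fields assemble into a finite--dimensional Lie algebra $\mathfrak g \subseteq \mathfrak{gl}(s+t,\mathbb{C}) \ltimes \mathbb{C}^{s+t}$ of global affine vector fields preserving $\widetilde\phi$, and by the previous step $\mathfrak g$ is transitive on the dense open set $\pi^{-1}(M^\circ)$, where $\pi \colon \widetilde M \to M$ is the projection.

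The crux is to upgrade transitivity from this dense open set to all of $\widetilde M$, equivalently to show $\Sigma = \emptyset$. Here the arithmetic of the OT construction enters through the deck group $\Gamma = \mathcal{O}_K \rtimes U$: because $\widetilde\phi$ is $\Gamma$--invariant, $\Gamma$ acts on $\mathfrak g$ by pushforward, and in particular $\mathfrak g$ becomes a module over the abelian group generated by the diagonal linear action $u \mapsto \mathrm{diag}(\sigma_1(u), \dots, \sigma_{s+t}(u))$ of the units $U$. The translation subalgebra $\mathfrak g \cap \mathbb{C}^{s+t}$ is a submodule, and the goal is to show it is all of $\mathbb{C}^{s+t}$; since the constant vector fields $\partial_{z_1}, \dots, \partial_{z_s}, \partial_{w_1}, \dots, \partial_{w_t}$ already span the tangent space at every point, this would force $\mathfrak g$ to be transitive everywhere. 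To reach it I would analyze directly the $\Gamma$--invariant holomorphic tensors on $\widetilde M$: the $\mathcal{O}_K$--translations make the components (in the flat frame) periodic along the $s+2t$ real lattice directions, while the $U$--equivariance constrains their transformation under the pairwise distinct characters $\sigma_j$; combined with the boundedness forced by compactness of $M$ (and the absence of nonconstant global holomorphic functions), this should force the components to be constant along the translation directions, i.e. $\partial_{z_j}, \partial_{w_k} \in \mathfrak g$.

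I expect this last step --- controlling the $\Gamma$--invariant holomorphic tensors on $\mathbb{H}^s \times \mathbb{C}^t$ via the diagonal unit action together with the compactness of $M$ --- to be the main obstacle, since it is exactly where the nondegeneracy built into the admissible unit group $U$ must be used, rather than any soft functoriality. Once the constant vector fields are known to preserve $\widetilde\phi$, the Killing algebra is transitive on all of $\widetilde M$, hence $\Sigma = \emptyset$, so $\phi$ is locally homogeneous on $M$; and as these Killing fields are affine for $\nabla_0$ by construction, the Lie algebra of local holomorphic affine vector fields preserving $\phi$ is transitive, as claimed.
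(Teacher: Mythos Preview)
Your strategy differs substantially from the paper's. The paper does \emph{not} pass through an open--dense orbit theorem at all for the affine--type case. Instead it establishes a single structural fact about OT--manifolds: every nonzero holomorphic section of $(TM)^{\otimes p}\otimes(T^*M)^{\otimes q}$ is nowhere vanishing. This follows because these tensor bundles split as direct sums of holomorphic line bundles (Kasuya), every holomorphic line bundle on an OT--manifold is flat (Angella--Parton--Vuletescu), and a flat line bundle on an OT--manifold admits a nonzero holomorphic section only if it is trivial (an Inoue--type argument using a lemma of Oeljeklaus--Toma). The paper then invokes Lemma~3.2 of \cite{D1}, which says that this nonvanishing property alone forces every holomorphic geometric structure of affine type to be locally homogeneous. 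The statement about affine Killing fields is obtained by juxtaposing $\phi$ with the affine structure, just as you do.

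Your route---rigidify, get an open dense locus $M^\circ$ of transitivity, lift to $\widetilde M$, and then argue that the constant vector fields $\partial_{z_j},\partial_{w_k}$ preserve $\widetilde\phi$---is a legitimate plan, but the decisive step~6 is not actually carried out. Two issues deserve attention. First, the ``open--dense orbit theorem'' you invoke for the \emph{Killing Lie algebra} (as opposed to the local isometry pseudogroup) is not a bare consequence of rigidity; in the holomorphic setting it is established in \cite{D1} under the hypothesis of algebraic dimension zero, which you should cite (it does hold for OT--manifolds). Second, and more seriously, your sketch for step~6 appeals to ``boundedness forced by compactness'' of the components of $\widetilde\phi$ in the flat frame, but those components are not functions on $M$: they are sections of associated bundles coming from the $D^k$--action on an equivariant embedding $Z\hookrightarrow\mathbb C^N$, and there is no a priori reason they lift to bounded functions on $\widetilde M$. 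To make this work you would need to know that those associated bundles split into flat line bundles and that their nonzero sections are nowhere vanishing---precisely the Kasuya/APV/Inoue input the paper uses directly. So your hard step, once made rigorous, collapses onto the paper's argument; the detour through the open--dense locus and the $\Gamma$--module analysis of $\mathfrak g$ becomes unnecessary scaffolding.

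In short: the overall architecture is sound but circuitous, and the crux you correctly flag as the obstacle is left as a heuristic. The paper's proof bypasses that obstacle by reducing everything to a clean line--bundle statement and a black--box criterion from \cite{D1}.
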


In particular, the above theorem asserts that any holomorphic tensor on an $\text{OT}$--manifold is locally homogeneous. Moreover, the 
Lie algebra of all local holomorphic affine vector fields preserving the holomorphic tensor is transitive on $M$.

\begin{theorem} \label{thm 2 introduction}
Let $M$ be a locally conformally K\"ahler $\text{OT}$--manifold. Then the following statements hold:
\begin{enumerate}
\item[{(i)}] Any holomorphic geometric structure $\phi$ on $M$ is 
locally homogeneous. Moreover, the Lie algebra of all local holomorphic affine vector fields on $M$
that preserve the geometric structure $\phi$ is transitive.

\item[{(ii)}] Any holomorphic Cartan geometry of algebraic type on $M$ is 
locally homogeneous. Moreover, the Lie algebra of local holomorphic affine vector fields on $M$
preserving the Cartan geometry is transitive.
\end{enumerate}
\end{theorem}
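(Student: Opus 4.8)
The plan is to reduce both parts to Theorem~\ref{thm 1 introduction} by enlarging the given structure with the standard flat affine connection $\nabla_0$ of $M$ and then running Gromov's structure theory for rigid holomorphic geometric structures \cite{Gr, DG}; the decisive feature of the locally conformally K\"ahler case is that the ``bad locus'' produced by that theory has nowhere to hide. First I would fix the torsion-free holomorphic flat affine connection $\nabla_0$ coming from the standard affine structure \cite{OT, Ko}, and recall that a local holomorphic vector field is \emph{affine} precisely when it preserves $\nabla_0$. For part (i), given an arbitrary holomorphic geometric structure $\phi$ (of algebraic type), I would form the pair $\widehat\phi := (\phi,\nabla_0)$; since it contains an affine connection, $\widehat\phi$ is a \emph{rigid} holomorphic geometric structure, and its local Killing fields are exactly the local holomorphic affine vector fields preserving $\phi$. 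For part (ii), a holomorphic Cartan geometry of algebraic type is already rigid \cite{Sh}, and I would pair it likewise with $\nabla_0$, so that the local Killing fields of the resulting rigid structure are the local affine vector fields preserving the Cartan geometry.

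Next I would invoke Gromov's theory. Let $S\subset M$ be the locus where the ``type'' of $\widehat\phi$ jumps, i.e.\ where its curvature and iterated covariant derivatives drop rank; this is a proper closed complex analytic subset, and on the dense open set $M_0:=M\setminus S$ these integrability invariants are \emph{honest} holomorphic tensor fields, that is, holomorphic geometric structures of affine type. This is the point where the distinction between algebraic and affine type matters: for a structure of algebraic type the invariants naturally live in bundles with projective fibers, and only after passing to the locus of constant type do they become genuine tensors. Assembling them into a single holomorphic tensor and applying Theorem~\ref{thm 1 introduction}, I conclude that on $M_0$ these invariants are locally homogeneous and invariant under a transitive Lie algebra of local holomorphic affine vector fields. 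By the criterion underlying Gromov's theorem---a rigid structure is locally homogeneous exactly where its complete system of integrability invariants is constant---$\widehat\phi$ is then locally homogeneous on $M_0$, the corresponding local Killing fields being affine and preserving $\phi$ (respectively the Cartan geometry). In particular $M_0\neq\varnothing$.

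Finally I would eliminate $S$. It is a proper closed complex analytic subset of the locally conformally K\"ahler OT--manifold $M$, which by \cite{OV} carries no proper complex subvarieties of positive dimension; and since every OT--manifold has $\dim_{\mathbb C}M\geq 2$, any isolated points of $S$ have codimension at least two, so by Hartogs' theorem the local Killing fields and the local homogeneity extend across them. Hence $S=\varnothing$, giving $M_0=M$: the structure $\phi$, respectively the Cartan geometry, is locally homogeneous on all of $M$, and the Lie algebra of local holomorphic affine vector fields preserving it is transitive, which is the assertion in (i) and (ii).

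The step I expect to be the main obstacle is the middle one: verifying that the integrability invariants of $\widehat\phi$ really do restrict to global holomorphic tensor fields on $M_0$ (so that Theorem~\ref{thm 1 introduction} is applicable), that the degeneracy locus $S$ is a \emph{proper} analytic subset rather than all of $M$, and that constancy of this complete system of invariants is genuinely equivalent to local homogeneity of $\widehat\phi$. This is precisely where rigidity of $\widehat\phi$ and the constant-coefficient description of holomorphic tensors on OT--manifolds (the engine behind Theorem~\ref{thm 1 introduction}) must be combined; once that is in place, the passage from ``locally homogeneous on a dense open set'' to ``locally homogeneous everywhere'' is carried entirely by the absence of proper subvarieties in the locally conformally K\"ahler case.
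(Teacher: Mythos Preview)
Your route differs from the paper's at both key steps, and each step as written has a gap.

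For producing local homogeneity on a dense open set, the paper does \emph{not} bootstrap from Theorem~\ref{thm 1 introduction}. It uses instead that OT--manifolds have algebraic dimension zero \cite{BO,OV} and invokes the Rosenlicht-based results of \cite{D1,D3} (and, for part (ii), the Melnick--Pecastaing machinery \cite{Me,Pe} for extended Cartan geometries) to conclude directly that the rigidified structure $\phi'=(\phi,\nabla_0)$ is locally homogeneous on $M\setminus S$. Your plan to apply Theorem~\ref{thm 1 introduction} to the integrability invariants on $M_0$ runs into the problem you yourself flag but do not resolve: Theorem~\ref{thm 1 introduction} is a statement about the whole OT--manifold, and its proof rests on the global splitting of tensor bundles into flat line bundles; it says nothing about tensors defined only on an open subset $M_0$, which is not itself an OT--manifold. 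You would need the invariants to be global holomorphic tensors on $M$, but you explicitly say they become genuine tensors only after restriction to the constant-type locus.

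The elimination of $S$ via Hartogs is also insufficient. By Amores' theorem the local Killing fields of $\phi'$ already form a locally constant sheaf on all of $M$, so there is nothing to extend; the issue is whether the evaluation map $\mathfrak{g}'\to T_pM$ is surjective at $p\in S$, and Hartogs says nothing about that. The paper argues differently: it lifts to the universal cover $\widetilde M=\mathbb{H}^s\times\mathbb{C}^t\subset\mathbb{C}^{s+t}$ and observes that, because the Killing fields of $\phi'$ are \emph{affine} (degree $\le 1$) vector fields on $\mathbb{C}^{s+t}$, the rank-drop locus $\widetilde S$ is cut out by polynomial equations and hence Zariski closed; but $\widetilde S$ is also the preimage of a nonempty finite set under an infinite covering, hence infinite and discrete---a contradiction. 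This is the decisive use of the affine structure beyond merely rigidifying $\phi$.

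For part (ii) there is a further technical point you pass over: a general Cartan geometry $(E_H,\eta)$ is not literally a geometric structure in Gromov's sense (the bundle $E_H$ need not be a frame bundle), and its juxtaposition with $\nabla_0$ is not a Cartan geometry either. The paper handles this through Pecastaing's framework of extended Cartan geometries \cite{Pe} and the curvature-jet maps of \cite{Me}, rather than by treating the pair as a Gromov structure.
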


The article is organized as follows. Section \ref{sect geom struct} introduces the two concepts of geometric structures which are 
studied in the paper: geometric structures in Gromov's sense \cite{Gr, DG} and Cartan geometries \cite{Sh}. Section \ref{sect OT} 
briefly presents the construction of $\text{OT}$--manifolds as discovered in \cite{OT} and also recall some of their main geometric features with 
focus on the properties used in our proofs. Section \ref{sect struct OT} provides the proofs of our results, in particular that of 
Theorem \ref{thm 1 introduction} (see Theorem \ref{thm1}) and of Theorem \ref{thm 2 introduction} (see Theorem \ref{lck OT} and 
Corollary \ref{Cartan lck OT}).

\section{Holomorphic geometric structures}\label{sect geom struct}

Let $M$ be a complex manifold of complex dimension $n$. For any integer $k \,\geq \,1$, we associate the 
principal bundle on it of $k$-frames $$R^k(M) \,\longrightarrow\, M,$$ which is the bundle of $k$-jets of 
local holomorphic coordinates on $M$. The structure group of this principal bundle is $D^k$: the group of all $k$-jets of 
local biholomorphisms of $\mathbb{C}^n$ fixing the origin. It may be mentioned that $D^k$ is a complex affine 
algebraic group \cite{Gr, DG,Ho,Hu}.

We define a geometric structure on $M$ as in \cite{Gr} (see also the expository survey \cite{DG}). 
\begin{definition}\label{def-a} A {\it holomorphic geometric structure} of order $k$ on $M$ is a holomorphic 
$D^k$-equivariant map $\phi$ from $R^k(M)$ to a complex algebraic manifold $Z$ endowed with an algebraic 
action of the group $D^k$.

Such a geometric structure $\phi$ is said to be {\it of affine type} if the above algebraic manifold $Z$ is a complex affine variety. 
\end{definition}

Holomorphic tensors are holomorphic geometric structures of affine type of order one. Indeed, a holomorphic 
tensor on $M$ is a holomorphic ${\rm GL}(n, \mathbb C)$-equivariant map from the frame bundle $R^1(M)$ to a 
linear complex algebraic representation $W$ of ${\rm GL}(n, \mathbb C)$. More precisely, for any integers 
$p,\,q \,\geq\, 0$, a holomorphic section of $(TM)^{\otimes p}\otimes (T^*M)^{\otimes q}$ is a ${\rm GL}(n, 
\mathbb C)$-equivariant holomorphic map from $R^1(M)$ to the representation $(\mathbb C)^{\otimes p} \otimes 
(\mathbb C^*)^{\otimes q}$.

Holomorphic affine connections are holomorphic geometric structures of affine type of order two \cite{Gr,DG}. 
Holomorphic foliations and holomorphic projective connections are holomorphic geometric structure of 
non-affine type.

The natural notion of (local) symmetry of a holomorphic geometric structure is classically defined as follows.

A (local) biholomorphism of $M$ preserves a holomorphic geometric structure $\phi$ if its canonical lift to 
the associated $k$-frame $R^k(M)$ fixes each fiber of the map $\phi$. Such a local biholomorphism is called a 
{\it local isometry} of $\phi$.

A (local) holomorphic vector field on $M$ is called a {\it Killing vector field} with 
respect to $\phi$ if its (local) flow acts on $M$ by local isometries.

\begin{definition}\label{d-rigid}
A holomorphic geometric structure $\phi$ is called {\it rigid} of order $l$ in 
Gromov's sense if any local biholomorphism preserving $\phi$ is uniquely determined by its 
$l$-jet at any given point.
\end{definition}

Holomorphic affine connections are rigid of order one in Gromov's sense (see \cite{Gr} 
and \cite{DG}). The rigidity comes from the fact that local 
biholomorphisms fixing a point and preserving a connection linearize in exponential 
coordinates, so they are indeed completely determined by their differential at the fixed 
point.

Holomorphic Riemannian metrics, holomorphic projective connections and holomorphic 
conformal structures of dimension at least three are all rigid holomorphic geometric structures,
while holomorphic symplectic structures and holomorphic foliations are non-rigid geometric 
structures~\cite{DG}.

By a theorem of Amores \cite{Am}, analytic local Killing fields on simply connected manifolds extend to global 
holomorphic Killing fields. This implies that the sheaf of local Killing fields of a holomorphic rigid 
geometric structure $\phi$ is locally constant. Its fiber is a finite dimensional Lie algebra called {\it the 
Killing Lie algebra} of $\phi$ \cite{DG,Gr}. If the Killing Lie algebra is transitive on an open dense subset $U$ 
in $M$, we say that that $\phi$ is {\it locally homogeneous} on $U$.

Let us now recall the classical definition of a {\it holomorphic Cartan geometry}.

Let $G$ be a connected complex Lie group and $H\, \subset\, G$ a connected
complex Lie subgroup. We denote by $\mathfrak h$ and $\mathfrak g$ the Lie algebras of $H$ and $G$
respectively.

\begin{definition}\label{def}
A holomorphic Cartan geometry of type $(G,\,H)$ on a {complex} manifold $M$ is a holomorphic principal 
$H$--bundle $\pi \,:\,\, E_H\, \longrightarrow\, M$ endowed with a $\mathfrak g$--valued holomorphic $1$--form 
$\eta$ on $E_H$ satisfying the following conditions:
\begin{enumerate}
\item $\eta\,\, :\,\, TE_H\,\, \longrightarrow\,\, E_H\times{\mathfrak g}$ is an
isomorphism;

\item $\eta$ is $H$--equivariant with $H$ acting on $\mathfrak g$ via the adjoint representation;

\item The restriction of $\eta$ to each fiber of $\pi$ coincides with the Maurer--Cartan form
associated to the action of $H$ on $E_H$.
\end{enumerate}
\end{definition}

The above definition is the infinitesimal version of the canonical fibration $G \,
\longrightarrow\, G/H$, seen as a holomorphic principal $H$-bundle equipped with the
left-invariant Maurer-Cartan form of $G$ (see \cite{Sh}).

The Cartan geometry is called {\it flat} if its curvature $\Omega(\eta)\,=\, d \eta+ \frac{1}{2} 
\lbrack \eta,\, \eta \rbrack$ vanishes identically.

By a classical result of Cartan, a flat Cartan geometry of type $(G,\,H)$ given by a principal
$H$--bundle $E_H$ and a form 
$\eta$ is flat if and only if $(E_H,\, \eta)$ is locally isomorphic to the fibration $G \,\longrightarrow\, G/H$ 
endowed with its canonical Maurer-Cartan form. As a consequence, Ehresmann proved that a complex manifold $M$ 
admits a flat Cartan geometry of type $(G,\,H)$ if and only if $M$ admits a holomorphic $(G,\, G/H)$--structure, 
which is a holomorphic atlas with values in the model space $G/H$ with the transition maps being given by 
the restriction of left-translation action, on $G/H$, of elements of $G$.

The Cartan geometry $(E_H,\, \eta)$ of type $(G,\,H)$ will be called of {\it algebraic type} if the image of $H$ 
through the adjoint representation of $G$ is an algebraic subgroup of ${\rm Aut}(\mathfrak{g})$ \cite{Ho,Hu}.

Holomorphic affine and projective connections, holomorphic conformal structures and holomorphic Riemannian 
metrics can be defined as holomorphic {\it Cartan geometries of algebraic type} \cite{Sh}. In particular, for 
$G\,=\, {\rm GL}(n , \mathbb C) \ltimes {\mathbb C}^n$ and $H\,=\, {\rm GL}(n , \mathbb C) $ one gets the 
definition of a Cartan geometry with model the complex affine space. In this case the bundle $E_H$ is 
necessarily isomorphic to the bundle of $1$-frames $R^1(M)$. This data is equivalent to the existence of a 
holomorphic affine connection $\nabla$ on $M$. The Cartan curvature is flat if and only if $\nabla$ is 
torsion-free and its curvature vanishes. By Ehresmann result, this is equivalent to the data of a holomorphic 
$(G\,=\, {\rm GL}(n , \mathbb C) \ltimes {\mathbb C}^n , {\mathbb C}^n)$--structure on $M$. Such an equivalence 
class of atlases with transition maps which are restrictions of affine transformations is classically termed 
as a {\it complex affine structure}.

There is a natural notion of (local) symmetry for holomorphic Cartan geometries. A (local) Killing field of the 
Cartan geometry $(E_H, \,\eta)$ is a local holomorphic vector field on $M$ which lifts to a local holomorphic 
vector field on $E_H$ whose local flow acts by bundle automorphisms on $E_H$ that preserve $\eta$. A local 
Killing field $X$ of a Cartan geometry extends to any simply connected open subset in $M$ containing
the domain of definition 
of $X$ \cite{Am}. This implies that the sheaf of local Killing vector fields is a locally constant sheaf on 
$M$ with fiber a finite dimensional Lie algebra; this Lie algebra
is called the {\it Killing Lie algebra} of the Cartan 
geometry. If the Killing Lie algebra is transitive on an open dense subset $U$ in $M$, we say that $(E_H,\, \eta)$ 
is locally homogeneous on $U$.

A flat Cartan geometry with model $(G,\,H)$ is locally homogeneous on $M$; its Killing Lie algebra is the Lie 
algebra $\mathfrak g$ of $G$. In particular, a complex affine structure on $M$ is locally homogeneous, and its 
local Killing Lie algebra is that of holomorphic affine vector fields (isomorphic to the Lie algebra of the 
complex affine group $G\,=\, {\rm GL}(n , \mathbb C) \ltimes {\mathbb C}^n$).

We will see in Section \ref{sect OT} that all $\text{OT}$--manifolds admit, by construction, a standard complex affine structure.

\section{Oeljeklaus--Toma manifolds}\label{sect OT}

An important new class of non-K\"ahler complex manifolds, generalizing Inoue surfaces in class ${\rm VII}_0$ \cite{In}, was 
constructed by Oeljeklaus and Toma, \cite{OT}, using ingredients from algebraic number theory. In this 
section we briefly describe the construction of those Oeljeklaus--Toma manifolds (which will be also denoted 
as $\text{OT}$--manifold) highlighting their geometric properties which will be useful for our study.

Let us follow \cite{OT} (see also \cite{PV}) and consider an algebraic number field $F$ of degree $n$ over 
$\mathbb Q$. We denote by ${\mathcal O}_F$ the ring of algebraic integers of $F$, and by ${\mathcal O}^{*}_F$ 
its multiplicative group of units. We assume that $F$ admits $n\,=\,s+2t$ embeddings into $\mathbb C$, with $s
\,>\,0$ of 
them being real and $2t \,>\,0$ being complex non-real. Let us denote by $\sigma_i$, with $i \,\in\,
\{1,\, \cdots,\, s\}$, the real embeddings and by $\sigma_{s+i}\,=\,\overline{\sigma}_{s+i+t}$,
where $i \,\in\, \{1,\, \cdots,\, t\}$, the complex non-real ones.

Using the above embeddings one defines a natural map
$$\sigma\,\, :\,\, F \,\,\longrightarrow\,\, {\mathbb C}^{s+t},\ \ \,
x\,\longmapsto\,(\sigma_1(x),\, \cdots, \,\sigma_{s+t}(x)).$$
The image of the algebraic integers $\mathcal O_F$ through $\sigma$ is known to be a lattice
of rank $n\,=\,s+2t$ in ${\mathbb C}^{s+t}$. Therefore there is a natural action of
$\mathcal O_F$ on ${\mathbb C}^{s+t}$ by translations, defined as
$$T(a,\, (z_1,\, \cdots ,\,z_{s+t}))\,=\,(\sigma_1(a)+z_1,\, \cdots,\, \sigma_{s+t}(a) + z_{s+t}),$$
for all $a \,\in\, \mathcal O_F$.

There is as well a natural multiplicative action $M$ of ${\mathcal O}^{*}_F$ on ${\mathbb C}^{s+t}$ given by 
the formula
$$M(u,\, (z_1,\, \cdots,\, z_{s+t}))\,=\,(\sigma_1(u)z_1,\, \cdots,\, \sigma_{s+t}(u)z_{s+t}),$$
for all $u\,\in\, \mathcal O_F^{*}$.

The above actions $M$ and $T$ together define an action of $\mathcal O_F^{*} \ltimes \mathcal O_F$ on 
${\mathbb C}^{s+t}$ by affine transformations. More precisely, an element $(u,\,a) \,\in\, \mathcal O_F^{*} 
\ltimes \mathcal O_F$ acts on ${\mathbb C}^{s+t}$ as $M_u \circ T_a$. The locus of fixed points of this 
$\mathcal O_F^{*} \ltimes \mathcal O_F$-action on ${\mathbb C}^{s+t}$ is contained in ${\mathbb R}^s \times 
{\mathbb C}^t$. Moreover, if one denotes by ${\mathcal O}_F^{*,+}$ the subgroup of positive units, meaning 
those units which are positive in all real embeddings of $F$, the above $\mathcal O_F^{*} \ltimes \mathcal 
O_F$ action induces an $\mathcal O_F^{*,+} \ltimes \mathcal O_F$ action on ${\mathbb C}^{s+t}$ preserving 
${\mathbb H}^s \times {\mathbb C}^t$, where $\mathbb H$ is the upper-half space. Therefore, this $\mathcal 
O_F^{*,+} \ltimes \mathcal O_F$--action on ${\mathbb H}^s \times {\mathbb C}^t$ is free. In general this 
action is not properly discontinuous, but it is proved in \cite{OT} that one can find for any $F$ an {\it 
admissible} subgroup $U \,\subset\, \mathcal O_F^{*,+}$ such that the induced action of $U \ltimes \mathcal 
O_F$ on ${\mathbb H}^s \times {\mathbb C}^t$ is properly discontinuous with compact quotient. Moreover, for 
$t\,=\,1$, any finite index subgroup $U \,\subset\, \mathcal O_F^{*,+}$ is admissible.

\begin{definition}
For any algebraic field extension $\lbrack F ,\, \mathbb Q \rbrack\, =\,n=\,s+2t$, with $s\,>\,0$ and $t\,>\,0$,
and any admissible subgroup $U \,\subset\, \mathcal O_F^{*,+}$, the compact complex manifold constructed
as quotient of ${\mathbb H}^s \times {\mathbb C}^t$ by the above $U \ltimes \mathcal O_F$--action
is called an {\it Oeljeklaus--Toma manifold} (abbreviated as {\it $\text{OT}$--manifold}).
\end{definition} 

It was proved in \cite{OT} that all Oeljeklaus--Toma manifolds are non-K\"ahler.

Since the above $U \ltimes \mathcal O_F$--action on ${\mathbb H}^s \times {\mathbb C}^t$ is by complex affine 
transformations, the affine structure of ${\mathbb H}^s \times {\mathbb C}^t$, induced by its inclusion in the 
standard complex affine space ${\mathbb C}^{s+t}$, descends on the quotient by $U \ltimes \mathcal O_F$ and 
endows any quotient $\text{OT}$--manifold with a standard {\it complex affine structure}.

A very useful result dealing with the algebraic dimension of the $\text{OT}$--manifolds was obtained in 
\cite{BO} where the authors prove that all $\text{OT}$--manifolds are of algebraic dimension zero. This means 
that there are no nonconstant meromorphic functions on a $\text{OT}$--manifold.

Therefore, $\text{OT}$--manifolds share with Inoue surfaces of class ${\rm VII}_0$ the properties that they are 
non-K\"ahler manifolds with algebraic dimension zero and admit a standard complex affine structure.

The {\it locally conformally K\"ahler geometry} of $\text{OT}$--manifolds was studied in \cite{OV, PV}. Let us 
recall that a {\it locally conformally K\"ahler metric} on a complex manifold $M$ is a conformal class of 
Hermitian metrics on $M$ which admits a local representative given by a K\"ahler metric. As it was first 
observed in \cite{Va}, $M$ admits a locally conformally K\"ahler metric if and only if $M$ has a Galois 
covering which admits a K\"ahler metric satisfying the condition that the Galois group of the covering
acts by holomorphic homotheties with respect to the K\"ahler metric.

In \cite{OT} its authors proved that for $t\,=\,1$ (number of complex non-real embeddings) all corresponding $\text{OT}$--manifolds 
admit a locally conformally K\"ahler metric.

Following \cite{OV}, we will say that a $\text{OT}$--manifold is locally conformally K\"ahler if it is
constructed from a number field $F$ with $t\,=\,1$ (meaning it admits exactly 2 complex non-real embeddings).

Ornea and Verbitsky proved in \cite{OV} that all locally conformally K\"ahler $\text{OT}$--manifolds are {\it 
simple}, meaning they do not admit closed proper subvarieties of positive dimension.

For general $\text{OT}$--manifolds, it was proved in \cite[Theorem 1.2]{OVV} that all their irreducible subvarieties $Z$ of smallest possible positive dimension are complex 
affine in the sense that at any smooth point of $Z$, the tangent sub-bundle to $Z$ is preserved by the 
standard flat affine connection of the ambient $\text{OT}$--manifold.

\section {Holomorphic geometric structures on Oeljeklaus--Toma manifolds}\label{sect struct OT}

\begin{theorem}\label{thm1}
Let $M$ be an $\text{OT}$--manifold, and let $\phi$ be a holomorphic geometric structure
of affine type on $M$. Then $\phi$ is locally homogeneous. Moreover, the Lie algebra of local holomorphic affine vector fields 
preserving $\phi$ is transitive on $M$.
\end{theorem}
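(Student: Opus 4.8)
The plan is to exploit the very rigid structure of OT-manifolds: they carry a standard complex flat affine structure coming from their construction as quotients of $\mathbb{H}^s \times \mathbb{C}^t$, and they have algebraic dimension zero. The strategy is to superpose the given holomorphic geometric structure $\phi$ of affine type onto this canonical affine structure, thereby producing a combined holomorphic \emph{rigid} geometric structure, and then to use an integrability/invariant-theoretic argument (in the spirit of Gromov's open-dense orbit theorem) to force local homogeneity.

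\medskip

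\noindent\textbf{Step 1: Promote to a rigid structure.} The affine connection $\nabla$ underlying the standard affine structure is itself a holomorphic geometric structure of affine type, rigid of order one. I would form the pair $\phi' = (\phi,\, \nabla)$, which is again a holomorphic geometric structure of affine type (the product target $Z \times Z_{\nabla}$ is still affine), and which is now \emph{rigid}, since $\nabla$ already rigidifies it. By Amores' theorem, the sheaf of local Killing fields of $\phi'$ is locally constant with finite-dimensional fibre, the Killing Lie algebra $\mathfrak{k}$. Every such Killing field is in particular a local holomorphic affine vector field (it preserves $\nabla$) that preserves $\phi$, so it suffices to show $\mathfrak{k}$ is transitive on a dense open set and then, by real-analyticity / local constancy, on all of $M$.

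\medskip

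\noindent\textbf{Step 2: Produce local symmetries using $a(M)=0$.} This is where the main obstacle lies, and where the OT-specific input enters. The idea is Gromov-theoretic: a holomorphic rigid geometric structure whose local isometry pseudogroup is \emph{not} locally transitive on a dense open set produces, via the associated jet-stratification, nonconstant invariants — meromorphic (or holomorphic) functions constant along the orbits of the isometry pseudogroup. Concretely, one considers the holomorphic map sending a frame to the collection of derivatives/components of $\phi'$ up to a high enough order, and examines its generic rank. Since OT-manifolds have algebraic dimension zero, there are no nonconstant meromorphic functions, so any such invariant must be locally constant, which pins the generic rank of the orbit map and forces the isometry pseudogroup to act with orbits of full dimension on a dense open set. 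I expect this step to require care in translating Gromov's integrability theorem for rigid structures into a statement about meromorphic invariants, and in ruling out that the invariants, though nonconstant holomorphic functions on the universal cover, fail to descend to meromorphic functions on $M$. The cleanest route is to invoke Gromov's theorem directly: the local Killing algebra acts with a dense open orbit on the open dense set where the structure is of constant type, and the complement is an analytic invariant subset which, by $a(M)=0$ together with compactness and simplicity-type arguments, must be empty or lower-dimensional.

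\medskip

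\noindent\textbf{Step 3: Upgrade dense transitivity to everywhere.} Once $\mathfrak{k}$ is transitive on a dense open $U$, I would propagate transitivity to all of $M$. Because the Killing sheaf is locally constant (Amores), the Killing fields extend across $M$, and the locus where they fail to span $T_xM$ is a proper analytic subset; arguing again with $a(M)=0$ (and, in the locally conformally Kähler case, with the absence of proper subvarieties from \cite{OV}) this locus is empty. Hence $\mathfrak{k}$ consists of global holomorphic affine vector fields preserving $\phi$ and is transitive on $M$, which is exactly the asserted local homogeneity together with transitivity of the Lie algebra of local holomorphic affine vector fields preserving $\phi$.
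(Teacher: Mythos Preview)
Your Steps 1 and 2 are essentially the argument the paper uses for its weaker Proposition (just after Theorem~\ref{lck OT}): rigidify by juxtaposing with the affine structure, then use $a(M)=0$ and Gromov/Rosenlicht invariant theory to get local homogeneity on a dense open $M\setminus S$. That part is fine.

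The gap is Step~3. You claim that $a(M)=0$ forces the analytic locus $S$ (where the Killing fields fail to span) to be empty, but this does not follow: a proper analytic subset does not by itself produce a nonconstant meromorphic function, so algebraic dimension zero gives no purchase. Your parenthetical appeal to \cite{OV} only handles the LCK case $t=1$; for a general OT--manifold there is, a priori, no reason coming from $a(M)=0$ alone why $S$ should be empty, and indeed the paper explicitly records only the $M\setminus S$ statement in that generality. So as written, your argument proves the Proposition, not Theorem~\ref{thm1}.

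The paper closes this gap by a completely different, affine-type--specific mechanism: it shows that \emph{every} nonzero holomorphic section of $(TM)^{\otimes p}\otimes (T^*M)^{\otimes q}$ is nowhere vanishing. This uses Kasuya's splitting of these tensor bundles into holomorphic line bundles, the flatness of all line bundles on OT--manifolds (Angella--Parton--Vuletescu), and the fact that a flat line bundle with a nonzero section is trivial. Once tensors never vanish, Lemma~3.2 of \cite{D1} gives local homogeneity on all of $M$ directly, with no need to excise a bad set and then argue it away. If you want to salvage your route, you would need to feed this nowhere-vanishing property (or an equivalent) into Step~3; without it, the upgrade from dense to global transitivity is unjustified.
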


\begin{proof}
We will prove that any $\text{OT}$--manifold $M$ satisfies the following property: Any non-zero holomorphic 
tensor has an empty vanishing set, namely for any integers $p,\,q \,\geq\, 0$ a non-zero holomorphic section of the 
holomorphic vector bundle $(TM)^{\otimes p}\otimes (T^*M)^{\otimes q}$ does not vanish
anywhere on $M$. Therefore, all 
holomorphic geometric structures of affine type on $M$ are locally homogeneous as a consequence of the 
criteria proved in Lemma 3.2 in \cite{D1}.
 
It was recently proved in \cite{Ka} (see Remark 3.2 of \cite{Ka}) that for any nonnegative integers $p,\,q$, the 
holomorphic vector bundle $(TM)^{\otimes p}\otimes (T^*M)^{\otimes q}$ over the $\text{OT}$--manifold splits as 
a sum of holomorphic line bundles over $M$. By Theorem 2.1 in \cite{APV}, any holomorphic line bundle over an 
$\text{OT}$--manifold is flat. Now a flat line bundle over an $\text{OT}$--manifold admits a non-zero holomorphic section if and only if it 
is trivial; consequently, the non-zero holomorphic section is a holomorphic section of the trivial bundle and 
it does not vanish anywhere. Indeed, this was first prove in \cite{In} (see Proposition 2, i) on page 274) for Inoue surfaces with the consequence that Inoue surfaces do
not contain curves. The same argument generalizes to $\text{OT}$--manifolds thanks to
Lemma 2.4 (on page 167) in \cite{OT}, with the consequence that $\text{OT}$--manifolds do not contain divisors (see Proposition 2.5 in \cite{APV} and Theorem 3.5 in \cite{BO}).

This finishes the proof of the above mentioned property, and, as mentioned
before Lemma 3.2 in \cite{D1} applies.

Notice that the juxtaposition of $\phi$ with the standard complex affine structure on $M$ provides a holomorphic geometric structure $\phi'$ on $M$ of affine type \cite{Gr,DG}. Since $\phi'$ is locally homogeneous, its Killing Lie algebra is transitive $M$. The Killing Lie algebra of $\phi'$ is the intersection of the Killing Lie algebra of $\phi$ with the Killing Lie algebra of the standard complex affine structure. It is the Lie algebra of affine vector fields preserving $\phi$; so this last Lie algebra is transitive on $M$.
\end{proof}

\begin{theorem}\label{lck OT}
Let $M$ be an $\text{OT}$--manifold which is locally conformally K\"ahler. Suppose $M$ is endowed with a 
holomorphic geometric structure $\phi$. Then $\phi$ is locally homogeneous. Moreover, the Lie algebra of local 
holomorphic affine vector fields preserving $\phi$ is transitive on $M$.
\end{theorem}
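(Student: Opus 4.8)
The plan is to reduce the problem to a rigid geometric structure and then exploit the two special features of a locally conformally K\"ahler $\text{OT}$--manifold $M$, namely that it has algebraic dimension zero and that it is simple in the sense of Ornea--Verbitsky \cite{OV}. First I would replace $\phi$ by the juxtaposition $\phi'$ of $\phi$ with the standard flat affine connection $\nabla$ on $M$. Since $\nabla$ is rigid of order one, $\phi'$ is a holomorphic rigid geometric structure whose local Killing fields are exactly the local Killing fields of $\phi$ that are, in addition, affine for $\nabla$. By the theory recalled above (Amores \cite{Am} together with the integrability theorem of Gromov \cite{Gr,DG}), the sheaf of local Killing fields of $\phi'$ is locally constant with a finite-dimensional fibre, and there is a generic dimension $r$ of the orbits of the local Killing algebra, attained on a dense open subset $U\subseteq M$ on which these orbits are the leaves of a holomorphic foliation.

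The core of the argument would be to show that $r$ equals $n=\dim_{\mathbb C}M$, that is, that $\phi'$ is locally homogeneous on $U$; this is precisely the point where the locally conformally K\"ahler hypothesis is needed and where the proof departs from that of Theorem \ref{thm1}. If $0<r<n$, the closures of the Killing orbits would be proper closed analytic subsets of $M$ of positive dimension $r$, contradicting the simplicity of $M$, which forbids proper subvarieties of positive dimension. The degenerate possibility $r=0$ would force the structure to admit, near a generic point, a maximal family of $n$ functionally independent local invariants of the isometry pseudogroup; I would exclude this using that $M$ has algebraic dimension zero, so that such invariants cannot assemble into nonconstant meromorphic functions on the compact manifold $M$. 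These two exclusions leave $r=n$.

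Once local homogeneity on the dense open set $U$ is in hand, I would promote it to all of $M$ by the same flatness mechanism used in Theorem \ref{thm1}. Choosing Killing fields $X_1,\dots,X_n$ of $\phi'$ that span the tangent space at one point of $U$, the section $X_1\wedge\cdots\wedge X_n$ of $\det TM=K_M^{-1}$ is holomorphic and nonzero; since $K_M$, and hence $\det TM$, is flat \cite{APV,OT}, and since a nonzero holomorphic section of a flat line bundle on an $\text{OT}$--manifold vanishes nowhere, this section is nowhere zero. Consequently $X_1,\dots,X_n$ span $T_xM$ at every $x\in M$, so the Killing algebra of $\phi'$ is transitive on $M$ and $\phi$ is locally homogeneous everywhere. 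Because this Killing algebra is contained in the affine Killing algebra of $\nabla$, it is a transitive Lie algebra of local holomorphic affine vector fields preserving $\phi$, exactly as in the last paragraph of the proof of Theorem \ref{thm1}.

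The hard part will be establishing that the generic orbit dimension is full rather than intermediate or zero: concretely, making rigorous that the Killing orbits have analytic closures so that simplicity can be invoked against the case $0<r<n$, and disposing of the case $r=0$ through algebraic dimension zero. By contrast, the passage from a dense open set to the entire manifold is straightforward, being governed entirely by the flatness of the canonical bundle of $M$.
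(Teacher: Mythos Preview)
Your overall architecture---juxtaposing $\phi$ with the flat affine connection to obtain a rigid structure $\phi'$, then establishing local homogeneity first on a dense open set and afterwards everywhere---matches the paper's. However, your assessment of where the difficulty lies is inverted, and both substeps depart from the paper's argument.

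For generic local homogeneity, the paper does not split into cases $r=0$ and $0<r<n$; it directly invokes Theorem~3 and Corollary~4 of \cite{D1} (a rigid holomorphic structure on a compact manifold of algebraic dimension zero is locally homogeneous on an open dense set $M\setminus S$), which is proved via Rosenlicht's theorem and uses only algebraic dimension zero, not simplicity. Your proposed treatment of $0<r<n$ via ``closures of Killing orbits are proper analytic subsets'' is unjustified---leaves of a holomorphic foliation need not have analytic closures---and is in any case unnecessary.

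The real gap is in the step you call straightforward. The Killing fields $X_1,\dots,X_n$ of $\phi'$ are only \emph{local} sections of $TM$: the sheaf of Killing fields is locally constant, but the monodromy representation of $\pi_1(M)$ on the Killing algebra is nontrivial in general, so $X_1\wedge\cdots\wedge X_n$ is not a global holomorphic section of $\det TM$ and the flatness argument from Theorem~\ref{thm1} does not apply. The paper instead uses simplicity \cite{OV} at this stage to conclude that $S$ is a finite set, then passes to the universal cover $\widetilde M=\mathbb{H}^s\times\mathbb{C}^t$, where the Killing algebra consists of global \emph{affine} vector fields on $\mathbb{C}^{s+t}$. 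The preimage $\widetilde S$ is then the rank-drop locus of a matrix with affine entries, hence the intersection of $\widetilde M$ with an algebraic subvariety of $\mathbb{C}^{s+t}$; being zero-dimensional it must be finite, yet if $S\neq\emptyset$ then $\widetilde S$ is infinite since $\pi_1(M)$ is. This contradiction, not the flatness of the canonical bundle, is what forces $S=\emptyset$.
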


\begin{proof}
The starting point of the proof is the property, proved by Ornea and Verbitsky in \cite[Corollary 1.7]{OV}, 
that locally conformally K\"ahler $\text{OT}$--manifolds are of algebraic dimension zero, meaning they do not 
admit non-constant meromorphic functions. Therefore, Theorem 3 and its Corollary 4 in \cite{D1} (see also 
Theorem 2.1 and its Corollary 2.2 in \cite{D3}) imply that any holomorphic rigid geometric structures on $M$ 
is locally homogeneous on an open dense subset $M \setminus S$, where $S$ is a nowhere dense
closed analytic subset of $M$ of positive codimension.

Let us now consider a (possibly non-rigid) holomorphic geometric structure $\phi$ on $M$. Add together the 
holomorphic affine structure of the $\text{OT}$--manifold with $\phi$ in order to obtain a holomorphic rigid 
geometric structure in Gromov's sense (the juxtaposition of a geometric structure with a rigid one is 
a rigid geometric structure) \cite{Gr,DG}; denote the rigid geometric structure thus obtained by $\phi'$.
The Killing Lie algebra of $\phi'$ is the intersection of the Killing Lie 
algebra of $\phi$ with the sheaf of the affine vector fields (with respect to the affine structure of the 
$\text{OT}$--manifold). Applying the above mentioned result of \cite{D1}
to $\phi'$ one gets that the Killing Lie algebra of $\phi'$ 
is transitive on $M \setminus S$, with $S$ being the closed analytic subset of $M$ where the Killing
Lie algebra of $\phi'$ drops rank.

Moreover, the main result in \cite{OV} (Theorem 1.3 of \cite{OV}) proves that locally conformally K\"ahler 
$\text{OT}$--manifolds do not admit non-trivial complex submanifolds. This implies that $S$ is of dimension 
zero: it is a finite number of points.

To prove by contradiction, assume that $S$ is non-empty.

Let us consider the universal cover $\widetilde{M}$ of $M$ which identifies with the open set ${\mathbb H}^s \times {\mathbb C}^t$ in
the standard complex affine 
space $\mathbb C^{s+t}$. The pull-back of the sheaf of Killing fields of $\phi'$ to the universal cover 
$\widetilde{M}$ is generated by global section \cite{Am}: it identifies with the Lie subalgebra 
$\mathfrak{g'}$ of complex affine vector fields in $\mathfrak{gl}(s+t, \mathbb C) \ltimes {\mathbb C}^{s+t}$ which 
preserve the pull-back of $\phi$ to $\widetilde{M}$ (it is the Killing Lie algebra of the pull-back of 
$\phi'$ to $\widetilde{M}$). Let us consider a basis ($k_1,\, \cdots ,\,k_l)$ of $\mathfrak{g'}$. The inverse 
image of $S$ in universal cover is the subset $\widetilde{S} \,\subset\, \widetilde{M}$ which identifies with 
the common zeros of the affine vector fields $k_i$. Therefore $\widetilde{S}$ is an algebraically Zariski closed set in 
${\mathbb H}^s \times {\mathbb C}^t$ (it coincides with the intersection of ${\mathbb H}^s \times {\mathbb C}^t$ with a linear subvariety in ${\mathbb C}^{s+t}$). On the other hand we have seen that $\widetilde{S}$ has dimension zero. Moreover, since the 
fundamental group of $M$ is infinite, we know that $\widetilde S$ is an infinite discrete subset: it
cannot be an algebraically Zariski closed subset in ${\mathbb H}^s \times {\mathbb C}^t$; a contradiction. This completes the proof.
\end{proof}

In the case of a general $\text{OT}$--manifold the above proof gives the following weaker result:
 
\begin{proposition}
A holomorphic geometric structure $\phi$ on a $\text{OT}$--manifold is locally homogeneous on $M \setminus S$, 
with $S$ a nowhere dense analytic subset of positive codimension. Moreover, the Lie algebra of local holomorphic affine vector fields 
preserving $\phi$ is transitive on $M \setminus S$.
\end{proposition}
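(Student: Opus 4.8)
The plan is to run the proof of Theorem~\ref{lck OT} verbatim up to the point where the special features of the locally conformally K\"ahler case are invoked, and then simply stop. The one input that remains valid for an arbitrary $\text{OT}$--manifold is that it has algebraic dimension zero; this was established for \emph{all} $\text{OT}$--manifolds in \cite{BO}, so it is available here without the restriction $t\,=\,1$.

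First I would reduce the (possibly non-rigid) geometric structure $\phi$ to a rigid one. Juxtaposing $\phi$ with the standard complex affine structure carried by every $\text{OT}$--manifold produces a holomorphic geometric structure $\phi'$ which is rigid in Gromov's sense, since the juxtaposition of any geometric structure with a rigid one is again rigid \cite{Gr,DG}. By construction the Killing Lie algebra of $\phi'$ is the intersection of the Killing Lie algebra of $\phi$ with the sheaf of affine vector fields of the affine structure; equivalently, it is the sheaf of local holomorphic affine vector fields that preserve $\phi$.

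Next I would apply the criterion of \cite{D1} (Theorem~3 and Corollary~4 there; see also \cite{D3}) to the rigid structure $\phi'$. Because $M$ has algebraic dimension zero, this criterion asserts that $\phi'$ is locally homogeneous on an open dense subset $M\setminus S$, where $S$ is the nowhere dense closed analytic subset on which the Killing Lie algebra of $\phi'$ fails to be transitive, that is, where its rank drops; such a set automatically has positive codimension. Transitivity of the Killing Lie algebra of $\phi'$ on $M\setminus S$ is exactly transitivity of the Lie algebra of affine vector fields preserving $\phi$ on $M\setminus S$, which yields the ``moreover'' assertion.

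The point at which the argument must halt, and hence the reason the conclusion is genuinely weaker than Theorem~\ref{lck OT}, is the control of $S$. In the locally conformally K\"ahler case one upgrades the conclusion by first using Theorem~1.3 of \cite{OV} (absence of non-trivial complex submanifolds) to force $S$ to be zero-dimensional, and then running the universal-cover/Zariski-closedness contradiction to conclude $S\,=\,\varnothing$. For a general $\text{OT}$--manifold neither ingredient is available: such manifolds may carry proper complex subvarieties, so $S$ need not be finite, and the discreteness argument on $\widetilde M$ breaks down. Thus the expected obstacle is precisely that $S$ cannot be eliminated, and the best statement one obtains is local homogeneity on the complement $M\setminus S$ of a positive-codimension analytic subset.
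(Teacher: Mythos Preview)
Your proposal is correct and follows essentially the same route as the paper: juxtapose $\phi$ with the standard affine structure to obtain a rigid $\phi'$, invoke \cite{BO} for algebraic dimension zero, and apply Theorem~3 and Corollary~4 of \cite{D1} to get local homogeneity on $M\setminus S$, identifying $S$ as the rank-drop locus. The additional commentary explaining why one cannot eliminate $S$ in the general case is accurate and matches the paper's remark preceding the Proposition.
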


\begin{proof}
As in the proof of Theorem \ref{lck OT} one considers the holomorphic rigid geometric structure $\phi'$ which 
is the juxtaposition of the geometric structure $\phi$ with the standard complex affine structure of the 
$\text{OT}$--manifold $M$. The main result in \cite{BO} proves that all $\text{OT}$--manifolds have algebraic 
dimension zero. Therefore, Theorem 3 and its Corollary 4 in \cite{D1} (see also Theorem 2.1 and its Corollary 
2.2 in \cite{D3}) imply that $\phi'$ is locally homogeneous on an open dense subset $M \setminus S$, where $S$ is 
a closed subset in $M$. This implies that the Killing Lie algebra of $\phi'$, which coincides with the 
Lie algebra of affine vector fields preserving $\phi$, is transitive on $M \setminus S$. Moreover, $S$ 
coincides with the locus where this Lie algebra of affine vector fields drops rank. Consequently,
$S$ is a nowhere dense analytic subset of positive codimension in $M$.
\end{proof}

\begin{corollary}
Let $M$ be an $\text{OT}$--manifold associated to an admissible subgroup of units $U$ such that any element $u \in U \setminus \{1 \}$ 
is a primitive element for the number field $F$. Then any holomorphic geometric structure $\phi$ on $M$ is locally homogeneous. 
Moreover, the intersection of the Killing Lie algebra of $\phi$ with the Killing Lie algebra of the affine structure is transitive on 
$M$.
\end{corollary}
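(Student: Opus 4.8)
The plan is to reduce the statement to the preceding proposition and then to show that, under the primitivity hypothesis on $U$, the exceptional set $S$ produced there is empty. Applying that proposition to the rigid structure $\phi'$ obtained by juxtaposing $\phi$ with the standard affine structure, one gets an open dense locally homogeneous locus $M\setminus S$, where $S$ is the nowhere dense analytic subset on which the Lie algebra $\mathfrak g'$ of affine vector fields preserving $\phi$ drops rank, and the Killing algebra of $\phi'$ coincides with $\mathfrak g'$ and is transitive on $M\setminus S$. It therefore suffices to prove that $S=\emptyset$, and I would argue by contradiction, assuming $S\neq\emptyset$.

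First I would lift the situation to the universal cover exactly as in the proof of Theorem \ref{lck OT}. On $\widetilde M=\mathbb H^s\times\mathbb C^t\subset\mathbb C^{s+t}$ the algebra $\mathfrak g'$ becomes a finite dimensional Lie algebra of complex affine vector fields, invariant under the deck group $\Gamma=U\ltimes\mathcal O_F$ acting by affine transformations, and generated by global sections $k_1,\dots,k_l$ \cite{Am}. The preimage $\widetilde S$ of $S$ is the set of points at which $k_1,\dots,k_l$ fail to span the tangent space; since each $k_i$ is affine, this is the simultaneous vanishing of the maximal minors, which are polynomials in the affine coordinates. Hence the Zariski closure $V:=\overline{\widetilde S}\subset\mathbb C^{s+t}$ is a $\Gamma$-invariant algebraic subvariety with $V\cap(\mathbb H^s\times\mathbb C^t)=\widetilde S$, nonempty and proper by assumption.

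The decisive step is to use the translational part of $\Gamma$. Every lattice translation $T_{\sigma(a)}$, $a\in\mathcal O_F$, lies in $\Gamma$ and preserves $V$. The set of translations of $\mathbb C^{s+t}$ preserving $V$ is a Zariski closed subgroup of the additive group; it contains $\sigma(\mathcal O_F)$, whose complex linear span is all of $\mathbb C^{s+t}$ (the lattice spans $\mathbb R^s\times\mathbb C^t$ over $\mathbb R$, and the complex span of $\mathbb R^s\times\mathbb C^t$ is $\mathbb C^{s+t}$). This subgroup is therefore Zariski dense, hence equal to all of $\mathbb C^{s+t}$, so $V$ is invariant under every translation. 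Consequently $V=\emptyset$ or $V=\mathbb C^{s+t}$, contradicting that $V$ is nonempty and proper. This gives $S=\emptyset$, so $\phi$ is locally homogeneous on all of $M$, and the transitivity on $M$ of the intersection of the Killing algebra of $\phi$ with that of the affine structure follows as in Theorem \ref{thm1}.

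The role of the primitivity hypothesis is to make the multiplicative part of $\Gamma$ exploitable in a more explicit second route: for $u\in U\setminus\{1\}$ primitivity means the $n$ conjugates of $u$ are pairwise distinct, so the diagonal map $M_u$ has pairwise distinct eigenvalues $\sigma_1(u),\dots,\sigma_{s+t}(u)$, all different from $1$ since each $\sigma_j$ is an injective embedding and $u\neq1$. One would then try to show that a subvariety invariant under all such $M_u$ is a union of coordinate subspaces $\{z_j=0:j\in J\}$, which can meet $\mathbb H^s\times\mathbb C^t$ only when $J\cap\{1,\dots,s\}=\emptyset$ (because $0\notin\mathbb H$), and which, by translation invariance together with the injectivity of each $\sigma_j$, must have $J=\emptyset$. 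The hard part in this route is precisely the classification of the $M_u$-invariant subvarieties: distinctness of the eigenvalues alone does not suffice, one needs a multiplicative independence among the conjugates $\sigma_i(u)$, and securing this is where the primitivity of the units would have to be used in full. The translational argument of the previous paragraph sidesteps this difficulty, which is why I would take it as the primary line of proof.
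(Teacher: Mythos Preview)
Your argument is correct and in fact proves more than the statement asks: the translational step does not use the primitivity hypothesis on $U$ at all, so it shows $S=\emptyset$ for \emph{every} OT--manifold and thereby upgrades the preceding Proposition to local homogeneity on all of $M$ in full generality. You notice this yourself and relegate the primitivity hypothesis to an optional ``second route''; that second route, as you sketch it, is both harder and unnecessary.

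The paper proceeds differently. It invokes \cite[Theorem~1.1]{OVV}, which asserts that under the primitivity assumption $M$ admits no nontrivial complex subvarieties; this forces $S$ to be a finite set of points. One then lifts to the universal cover and argues exactly as in the proof of Theorem~\ref{lck OT}: the preimage $\widetilde S$ is simultaneously an infinite discrete set (since $\pi_1(M)$ is infinite) and the trace on $\mathbb H^s\times\mathbb C^t$ of an algebraic subset of $\mathbb C^{s+t}$, which is impossible. Thus the paper uses the external input from \cite{OVV} to first reduce to dimension zero, whereas you bypass this entirely by observing that the stabilizer of $V$ in the additive group $(\mathbb C^{s+t},+)$ is a Zariski closed subgroup---hence a $\mathbb C$-linear subspace in characteristic zero---containing the lattice $\sigma(\mathcal O_F)$, whose $\mathbb C$-span is everything; so $V$ is invariant under all translations and must be empty or all of $\mathbb C^{s+t}$. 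Your route is more elementary and strictly stronger, being independent of \cite{OVV}; the paper's route, on the other hand, makes the primitivity hypothesis do visible work and ties the corollary to the subvariety theory of OT--manifolds developed in \cite{OVV}.
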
 

\begin{proof}
The same proof of the previous Proposition, together with the fact that, under the assumptions, $M$ does not admit nontrivial complex 
subvarieties by \cite[Theorem 1.1]{OVV}, imply that the Killing Lie subalgebra of the affine structure preserving the holomorphic 
geometric structure is transitive away from a finite set of points $S$. We prove that $S$ is empty as in the proof of Theorem 
\ref{lck OT}.
\end{proof} 

Let us now state and prove the analogous results for Cartan geometries.

\begin{theorem}\label{alg dim 0} 
Let $M$ be a complex manifold of algebraic dimension zero, bearing a holomorphic torsion-free affine 
connection $\nabla$. Then the following statements hold:
\begin{enumerate}
\item[{(i)}] Any holomorphic Cartan geometry of algebraic type on $M$ is locally homogeneous on an open dense
subset $M 
\setminus S$, with $S$ a nowhere dense closed analytic subset of positive codimension.

\item[{(ii)}] The intersection of the Killing Lie algebra of the Cartan geometry with the Killing Lie algebra of 
$\nabla$ is transitive on $M \setminus S$.
\end{enumerate}
\end{theorem}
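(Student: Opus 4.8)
The plan is to reduce the statement about Cartan geometries to the already-understood setting of rigid geometric structures in Gromov's sense, exactly as was done for ordinary geometric structures in the proof of Theorem \ref{lck OT}. The key fact I would exploit is that a holomorphic Cartan geometry of algebraic type on $M$, together with the torsion-free affine connection $\nabla$, can be encoded as a holomorphic rigid geometric structure of affine type on $M$. Concretely, a Cartan geometry $(E_H,\eta)$ of type $(G,H)$ of algebraic type produces, after choosing the affine connection $\nabla$ to trivialize the higher-order frame data, an $H$-equivariant (hence $D^k$-equivariant through the connection) holomorphic map from a frame bundle $R^k(M)$ to an affine variety $Z$ carrying an algebraic $D^k$-action. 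This is the standard translation between Cartan geometries of algebraic type and Gromov's geometric structures (compare the discussion in Section \ref{sect geom struct} relating affine connections to Cartan geometries with model the complex affine space). The juxtaposition of this structure with $\nabla$ itself is rigid, since the juxtaposition of any geometric structure with a rigid one (here the affine connection) is rigid.

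Once $M$ carries a holomorphic rigid geometric structure $\phi'$ of affine type encoding both the Cartan geometry and $\nabla$, I would invoke the hypothesis that $M$ has algebraic dimension zero. By Theorem 3 and its Corollary 4 in \cite{D1} (equivalently Theorem 2.1 and Corollary 2.2 in \cite{D3}), a holomorphic rigid geometric structure on a complex manifold of algebraic dimension zero is locally homogeneous on an open dense subset $M \setminus S$, where $S$ is a nowhere dense closed analytic subset of positive codimension, and $S$ is precisely the locus where the Killing Lie algebra of $\phi'$ drops rank. This immediately gives statement (i): the Cartan geometry is locally homogeneous on $M \setminus S$. For statement (ii), I would observe that the Killing Lie algebra of $\phi'$ is by construction the intersection of the Killing Lie algebra of the Cartan geometry with the Killing Lie algebra of $\nabla$, so transitivity of the former on $M \setminus S$ is exactly transitivity of this intersection.

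The main obstacle I anticipate is the first step: making rigorous the claim that a Cartan geometry of algebraic type together with an affine connection gives a rigid geometric structure of affine type in Gromov's sense. One must verify that the target space $Z$ on which the relevant frame bundle maps is genuinely an affine variety with an algebraic action of the structure group $D^k$, which is where the algebraicity hypothesis on the adjoint image of $H$ in $\mathrm{Aut}(\mathfrak g)$ is essential. The role of the affine connection $\nabla$ is to supply the $\mathrm{GL}(n,\mathbb C)$-reduction and the exponential coordinates needed to pass from the abstract $H$-bundle $E_H$ to a concrete $D^k$-equivariant description, thereby guaranteeing both the affine type and the rigidity. I expect the rest of the argument to be a direct transcription of the proof of Theorem \ref{lck OT}, with the phrase ``holomorphic geometric structure'' replaced by ``holomorphic Cartan geometry of algebraic type'' throughout, and with no further complication since algebraic dimension zero is the only analytic input required for this intermediate statement.
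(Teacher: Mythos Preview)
Your strategy---reduce to Gromov's setting and quote \cite{D1}---is different from the paper's, and the step you flag as the main obstacle is indeed where the difficulty lies. The paper does \emph{not} encode the Cartan geometry as a $D^k$-equivariant map on $R^k(M)$; instead it re-runs the Rosenlicht/orbit argument directly on the abstract $H$-bundle $E_H$, using the Melnick--Pecastaing machinery of curvature jets \cite{Me,Pe}. Concretely, one takes the $H$-equivariant map ${\mathcal D}^m=(\Omega,J^1\Omega,\dots,J^m\Omega)$ from $E_H$ into a linear $H$-representation, invokes Rosenlicht's theorem for the algebraic $H$-action (this is where ``algebraic type'' enters), composes invariant rational functions with ${\mathcal D}^m$ to get meromorphic functions on $M$, and uses algebraic dimension zero to force the image over an open dense set to be a single $H$-orbit---which by \cite[Theorem~4.1]{Me} and \cite[Theorem~1.1]{Pe} is equivalent to local homogeneity. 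For part (ii) the connection $\nabla$ is brought in not by juxtaposition in Gromov's sense but by forming the \emph{extended Cartan geometry} of \cite{Pe}: one adjoins to ${\mathcal D}^m$ the pull-back via $i:E_H\to R^1(M)$ of the curvature-jet map ${\mathcal K}^m$ of $\nabla$ (computed in exponential coordinates), obtaining a single $H$-equivariant map to which Rosenlicht applies again.

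The reason your reduction is delicate is that $E_H$ is an abstract $H$-bundle, not a priori a sub-bundle of any $R^k(M)$; the natural map $i:E_H\to R^1(M)$ in the paper goes the wrong way for your purposes and may have a nontrivial kernel $K\subset H$. Writing the pair $(E_H,\eta)$ as a $D^k$-equivariant map on $R^k(M)$ would amount to exhibiting the Cartan geometry as a section of a natural bundle, which is possible in many situations but is not automatic and is exactly what the extended-Cartan-geometry formalism of \cite{Pe} is designed to bypass. So your outline is morally correct, but to make it rigorous you would end up rebuilding a substantial part of \cite{Me,Pe}; the paper simply quotes that machinery instead.
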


\begin{proof} Let $M$ be a complex manifold of complex dimension $n$.
Let $(E_H,\, \eta)$ be a Cartan geometry on $M$ of type $(G,\,H)$. There is a canonical
isomorphism of vector bundles between 
$TM$ and $E_H \times_{H} {\mathfrak g}/{\mathfrak h}$, where $H$ acts on ${\mathfrak g}/{\mathfrak h}$ via the 
adjoint representation ${\rm Ad}.$ Moreover, each point $p \,\in\, E_H$ lying above $b \,\in\, M$
defines an isomorphism $i_p \,:\, T_bM \,\longrightarrow\, {\mathfrak g}/{\mathfrak h}$ such that
$i_{ph}\,=\,{\rm Ad}(h^{-1}) i_p$, for all $h \,\in\, H$ (see 
\cite[p.~188, Theorem 3.15]{Sh}).

Denote by $K$ the kernel the adjoint representation ${\rm Ad} \,:\, H \,\longrightarrow\, {\rm GL}({\mathfrak 
g}/ {\mathfrak h})$, and also denote by $F_H$ the quotient of $E_H$ by $K$. It follows that the principal 
$H/K$--bundle $F_H$ get identified with a principal subbundle of the first jet bundle $R^1(M)$. This defines a 
quotient map $$i\,\,: \,\,E_H \,\,\longrightarrow\,\, F_H \,\,\simeq\,\, R^1(M).$$

Notice that the form $\eta$ trivializes the holomorphic tangent vector bundle of $E_H$ and gives an
isomorphism of vector bundles: $TE_H \,\simeq\, E_H \times {\mathfrak g}$. The curvature $\Omega$ of
the Cartan connection is completely
determined by 
an $H$-equivariant map $\Omega \,:\, E_H \,\longrightarrow\, V$, with $V\,=\,\Lambda^2({\mathfrak g}/{\mathfrak h}) \otimes 
{\mathfrak g}$, with $H$ acting linearly on $V$ by
$$ h \cdot l(u,\,v)\,=\,({\rm Ad} (p) \circ l) ({\widetilde{\rm Ad}} (h^{-1})u,\, {\widetilde{\rm Ad}} (h^{-1})v)$$
for all $h \,\in\, H$, where ${\widetilde{\rm Ad}}$ is the induced adjoint
$H$-action on ${\mathfrak g}/{\mathfrak h}$ (see \cite[Chapter 5, Lemma 3.23]{Sh}).

Following \cite{Me, Pe} we define the $m$--jet of the Cartan curvature $\Omega$ of $(E_H,\, \eta)$ as a map:
$$J^{m} \Omega \,\,:\,\, E_H \,\,\longrightarrow\,\,{\rm Hom} (\otimes^m \mathfrak g, V)$$ defined as 
$J^m \Omega (b h^{-1})\,=\,h \cdot (J^m \Omega (b) \circ {\rm Ad}^mh^{-1})$, for all $b \,\in\, E_H$,
with ${\rm Ad}^m$ being the $m$-th tensor power $\bigotimes ^m \mathfrak g$ of the adjoint representation
restricted of $H$ (see \cite[Proposition 3.1]{Me}).

Consider now, as in \cite{Me, Pe}, the map $${\mathcal D}^m \,:\, E_H \,\longrightarrow\,
V \oplus {\rm Hom}(\mathfrak g,\, V) \oplus \cdots\oplus {\rm Hom} (\otimes^m \mathfrak g,\, V)$$
defined as ${\mathcal D}^m \,=\, (\Omega,\, J^1 \Omega,\, \cdots, \,J^m \Omega).$ 

It was proved in \cite[Theorem 4.1]{Me} and in \cite[Theorem 1.1] {Pe} that, for
$m\,=\,{\rm dim (H)}$, the image of the map ${\mathcal 
D}^m$ is exactly a $H$-orbit if and only if the Cartan geometry
$(E_H, \,\eta)$ is locally homogeneous.

Notice that $(E_H,\, \eta)$ being of algebraic type the $H$--action on
$$V \oplus {\rm Hom}({\mathfrak g},\, V) \oplus \cdots\oplus {\rm Hom} (\otimes^m{\mathfrak g},\, V)$$ is
algebraic. For algebraic actions, 
Rosenlicht's Theorem, \cite{Ro}, implies that $H$-orbits in general position in $V \oplus {\rm Hom}({\mathfrak g},\, V)
\oplus \cdots\oplus {\rm Hom} (\otimes^m {\mathfrak g},\, V)$ are separated by rational functions. In other
words, any $H$-invariant rational function $\mathcal R$ on 
$V \oplus {\rm Hom}({\mathfrak g},\, V) \oplus \cdots \oplus {\rm Hom} (\otimes^m {\mathfrak g},\, V)$
defines an $H$-invariant meromorphic map ${\mathcal R} \circ {\mathcal D}^m$ on $E_H$. Consequently, 
the meromorphic map ${\mathcal R} \circ {\mathcal D}^m$ descends on $M$ as a constant function, since
the algebraic dimension of $M$ is zero. This implies that $\mathcal R$ is constant on
${\mathcal D}^m(E_H)$. From this it follows that there exists an open dense
set $U$ in $M$ such that ${\mathcal D}^m(E_H \arrowvert_U)$ is exactly an $H$-orbit. Therefore, $(E_H, \,
\eta)$ is locally homogeneous over $U$; more details can be found in \cite[Corollary 1.3]{D2}.

Since the Killing Lie algebra for $(E_H,\, \eta)$ is transitive on an open dense subset, it follows that that $U
\,=\, M\setminus S$, with $S$ being the closed subset of $M$ where the Killing Lie algebra drops rank.
Consequently, $S$ is a 
nowhere dense analytic subset of positive codimension in $M$. This proves point (i) of Theorem \ref{alg dim 
0}.

Proof of (ii). Let us now consider the geometric structure which is the juxtaposition of the Cartan geometry 
$(E_H, \,\eta)$ and the holomorphic affine connection $\nabla$. This juxtaposition is not in general a Cartan 
geometry, but we will see that this can be handled as an extended Cartan geometry in the sense of \cite{Pe}.

Consider a point $(b,\,l)$ in the frame bundle $R^1(M)$ above $b \,\in\, M$. This point defines an isomorphism 
$l$ between ${\mathbb C}^n$ and $T_bM$. The point $(b,\,l) \,\in\, R^1(M)$ defines a unique system of local 
coordinates at $b \,\in\, M$ which are exponential with respect to $\nabla$ (meaning that the Christoffel 
coefficients of $\nabla$ vanishes at the origin for this system of local coordinates). Then we consider the 
$m$-jet of the curvature of $\nabla$ at the origin with respect to this exponential coordinates. This gives us 
a ${\rm GL}(n, \mathbb C)$-equivariant map $R^1(M)\,\longrightarrow\, W^m$, with $W^m$ being the vector space 
of $m$-jets of curvature. Replacing the point $(b,\,l) \,\in\, R^1(M)$ by $(b, \,l')\,\in\, R^1(M)$, where 
$l'\,=\,lh$ with $h \,\in\, {\rm GL}(n, \mathbb C)$, gives another system of local exponential coordinates at 
$b$, and taking the $m$-jet of the curvature in those coordinates defines the action of ${\rm GL}(n, \mathbb 
C)$ on $W^m$. This action is algebraic (see \cite{DG} for more details).

Therefore the $m$-jet of the curvature of $\nabla$ is given by a ${\rm GL}(n, \mathbb C)$-equivariant map 
$${\mathcal K}^m: R^1(M) \longrightarrow W^m.$$
Now consider the $H$-equivariant map
$$E_H \,\longrightarrow\, V \oplus {\rm Hom}({\mathfrak g},\, V) \oplus \ldots
\oplus {\rm Hom} (\otimes^m {\mathfrak g}, V) \oplus W^m$$ defined by 
$$ p\,\, \longmapsto\,\, {\mathcal D}^m(p) \oplus ({\mathcal K}^m \circ i )(p),$$
where $i\,:\, E_H \,\longrightarrow \,F_H\, \simeq\, R^1(M)$ is the quotient bundle map described above.

As before, Rosenlicht's Theorem, \cite{Ro}, implies that there exists an open dense set $U$ in $M$ such that $({\mathcal 
D}^m \oplus ({\mathcal K}^m \circ i ))(E_H\arrowvert_U)$ is exactly an $H$-orbit. Therefore, $(E_H, \,\eta)$ 
together with $\nabla$ form an extended Cartan geometry which is locally homogeneous over $U$ \cite[Section 
4.4.2 and Section 4.4.3]{Pe}. Hence the intersection between the Killing Lie algebra of $(E_H,\, \eta)$ and 
the Killing Lie algebra of $\nabla$ is transitive on $U$. It follows that that $U\,=\, M \setminus S$, with $S$ 
being the closed subset of $M$ where the Killing Lie algebra drops rank. Therefore, $S$ is a nowhere
dense analytic subset of positive codimension in $M$.
\end{proof}

\begin{corollary}\label{Cor 1}
A holomorphic Cartan geometry of algebraic type on an $\text{OT}$--manifold is locally homogeneous
on $M \setminus S$, where $S$ is a nowhere dense analytic subset of positive codimension in $M$. Moreover, the 
Lie algebra of local holomorphic affine vector fields on $M$ preserving the Cartan geometry is transitive on $M \setminus S$.
\end{corollary}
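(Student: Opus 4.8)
The plan is to derive this corollary as an immediate specialization of Theorem \ref{alg dim 0}, whose hypotheses—algebraic dimension zero together with the presence of a torsion-free holomorphic affine connection—are both furnished automatically by the structure theory of $\text{OT}$--manifolds recalled in Section \ref{sect OT}. So the work consists almost entirely in verifying that these two hypotheses are met and then invoking the theorem.

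First I would record the two structural inputs. On the one hand, by the main result of \cite{BO} recalled above, every $\text{OT}$--manifold $M$ has algebraic dimension zero. On the other hand, by construction every $\text{OT}$--manifold carries a \emph{standard complex affine structure}: the affine structure of $\mathbb{C}^{s+t}$ restricts to $\mathbb{H}^s \times \mathbb{C}^t$ and descends to the quotient, since the $U \ltimes \mathcal{O}_F$--action is by complex affine transformations. As explained in Section \ref{sect geom struct}, such a complex affine structure is equivalent to a torsion-free holomorphic flat affine connection $\nabla$ on $M$. Thus $M$ satisfies precisely the hypotheses of Theorem \ref{alg dim 0} with this $\nabla$.

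With these inputs in place, I would apply Theorem \ref{alg dim 0} directly. Part (i) of that theorem yields that any holomorphic Cartan geometry of algebraic type on $M$ is locally homogeneous on an open dense subset $M \setminus S$, with $S$ a nowhere dense closed analytic subset of positive codimension; this is the first assertion of the corollary verbatim. For the \emph{Moreover} clause, part (ii) of the theorem gives that the intersection of the Killing Lie algebra of the Cartan geometry with the Killing Lie algebra of $\nabla$ is transitive on $M \setminus S$. The only remaining point is the identification of the latter algebra: since $\nabla$ is the flat connection attached to the standard complex affine structure, its Killing Lie algebra is exactly the Lie algebra of local holomorphic affine vector fields on $M$, as recorded at the end of Section \ref{sect geom struct} for complex affine structures. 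Hence the intersection coincides with the Lie algebra of local holomorphic affine vector fields preserving the Cartan geometry, and transitivity on $M \setminus S$ follows.

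I do not expect a genuine obstacle here, as the statement is a corollary by design; the only mild point of care is the identification of the Killing Lie algebra of the standard affine connection with the affine vector fields, which is precisely the content of the flat-Cartan-geometry discussion preceding Section \ref{sect OT}, and the observation that the standard connection is torsion-free (being flat and arising from a $(G,G/H)$--structure), so that Theorem \ref{alg dim 0} indeed applies.
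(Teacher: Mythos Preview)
Your proposal is correct and follows essentially the same approach as the paper: verify that $\text{OT}$--manifolds have algebraic dimension zero \cite{BO} and carry the standard flat torsion-free affine connection, then invoke Theorem~\ref{alg dim 0} and identify the Killing Lie algebra of $\nabla$ with the local holomorphic affine vector fields. The paper's proof is slightly terser but the logic is identical.
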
 

\begin{proof}
Let $(E_H, \,\eta)$ be a holomorphic Cartan geometry of algebraic type on the $\text{OT}$--manifold $M$. Since 
$\text{OT}$--manifolds are known to be of algebraic dimension zero \cite{BO} and are endowed, by construction, 
with a standard affine structure, \cite{OT}, Theorem \ref{alg dim 0} applies to $(E_H,\, \eta)$ and to the flat 
torsion-free affine connection $\nabla$ associated to the affine structure. Hence the Lie algebra of 
affine vector fields preserving the Cartan geometry $(E_H,\, \eta)$ is transitive on $M \setminus S$. Since $S$ 
is the subset in $M$ where the Lie algebra of affine vector fields preserving $(E_H, \,\eta)$ drops rank, it
follows that $S$ is a nowhere dense analytic subset of positive codimension in $M$.
\end{proof}

\begin{corollary}\label{Cartan lck OT}
A holomorphic Cartan geometry of algebraic type on a locally conformally K\"ahler $\text{OT}$--manifold is 
locally homogeneous. Moreover, the intersection of the Killing Lie algebra of the Cartan geometry with the 
Killing Lie algebra of the affine structure is transitive.
\end{corollary}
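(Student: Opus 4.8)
The plan is to adapt, essentially verbatim, the contradiction argument from the proof of Theorem \ref{lck OT} to the Cartan-geometric setting, with Corollary \ref{Cor 1} playing the role there played by the Proposition. First I would observe that a locally conformally K\"ahler $\text{OT}$--manifold is in particular an $\text{OT}$--manifold, hence of algebraic dimension zero \cite{BO} and equipped by construction with its standard flat torsion-free affine connection $\nabla$ \cite{OT}. Thus Corollary \ref{Cor 1} applies: any holomorphic Cartan geometry $(E_H,\, \eta)$ of algebraic type on $M$ is locally homogeneous on an open dense subset $M \setminus S$, and the Lie algebra $\mathfrak{g}'$ obtained as the intersection of the Killing Lie algebra of $(E_H,\, \eta)$ with the Killing Lie algebra of $\nabla$ is transitive on $M \setminus S$, where $S$ is precisely the locus where $\mathfrak{g}'$ drops rank, a nowhere dense closed analytic subset of positive codimension.

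Next I would cut down the dimension of $S$ using simplicity. By the main result of Ornea and Verbitsky (Theorem 1.3 in \cite{OV}), a locally conformally K\"ahler $\text{OT}$--manifold admits no nontrivial complex submanifold. Since $S$ is a closed analytic subset of positive codimension, this forces $S$ to be zero-dimensional, that is, a finite set of points.

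The heart of the argument is then a contradiction on the universal cover. Assuming $S \,\neq\, \emptyset$, I would pass to the universal cover $\widetilde{M}$, which identifies with the open subset $\mathbb{H}^s \times \mathbb{C}^t$ of the standard complex affine space $\mathbb{C}^{s+t}$. By Amores' extension theorem \cite{Am} the pulled-back sheaf of Killing fields of the juxtaposed (extended Cartan) structure is generated by global sections, and $\mathfrak{g}'$ identifies with a Lie subalgebra of the affine Lie algebra $\mathfrak{gl}(s+t,\, \mathbb{C}) \ltimes \mathbb{C}^{s+t}$. Choosing a basis $(k_1,\, \cdots,\, k_l)$ of $\mathfrak{g}'$, the preimage $\widetilde{S}$ of $S$ coincides with the common zero set of the affine vector fields $k_i$, hence with the intersection of $\mathbb{H}^s \times \mathbb{C}^t$ with a linear subvariety of $\mathbb{C}^{s+t}$; in particular $\widetilde{S}$ is Zariski closed. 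On the other hand, since $\pi_1(M)\,=\,U \ltimes \mathcal{O}_F$ is infinite, the preimage of the nonempty finite set $S$ is an infinite discrete subset of $\widetilde{M}$, which cannot be Zariski closed in $\mathbb{H}^s \times \mathbb{C}^t$. This contradiction shows $S\,=\,\emptyset$, so $(E_H,\, \eta)$ is locally homogeneous on all of $M$ with $\mathfrak{g}'$ transitive everywhere.

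The step I expect to be the genuine crux, rather than the purely formal contradiction, is the assertion that $\widetilde{S}$ is \emph{algebraic}: this rests entirely on the fact that the fields in $\mathfrak{g}'$ are \emph{affine}, so that their common zero locus is a linear subvariety. This is exactly why one must work with the intersection $\mathfrak{g}'$ supplied by point (ii) of Theorem \ref{alg dim 0} and by Corollary \ref{Cor 1}, and not with the full Killing Lie algebra of $(E_H,\, \eta)$, whose fields need not be affine and whose zero set would carry no such algebraicity. Everything else in the proof is a transcription of the reasoning already carried out for Theorem \ref{lck OT}.
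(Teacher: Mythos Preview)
Your proposal is correct and follows essentially the same approach as the paper: invoke Corollary~\ref{Cor 1} to obtain transitivity of the intersection Lie algebra on $M\setminus S$, use \cite{OV} to force $S$ to be finite, and then reproduce the contradiction argument from Theorem~\ref{lck OT} on the universal cover to conclude $S=\emptyset$. The paper's proof simply says ``we conclude as in Theorem~\ref{lck OT}'' at the final step, whereas you have written those details out explicitly; your identification of the algebraicity of $\widetilde{S}$ (via the affineness of the fields in $\mathfrak{g}'$) as the genuine crux is exactly right.
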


\begin{proof}
By Corollary \ref{Cor 1}, the intersection of the Killing Lie algebra of the Cartan geometry with the 
Killing Lie algebra of the affine structure is transitive away from the analytic subset $S$. By \cite[Theorem 3.1]{OV}, locally
conformally K\"ahler $\text{OT}$--manifolds do not admit 
nontrivial complex submanifolds. Therefore $S$ is a finite union of points. We conclude as in Theorem \ref{lck 
OT} that $S$ is the empty set.
\end{proof} 

\begin{corollary}
Let $M$ be an $\text{OT}$--manifold associated to an admissible subgroup of units $U$ such that any element $u \in U \setminus \{1 \}$ 
is a primitive element for the number field $F$. Then any holomorphic Cartan geometry of algebraic type on $M$ is locally 
homogeneous. Moreover, the intersection of the Killing Lie algebra of the Cartan geometry with the Killing Lie algebra of the affine 
structure is transitive.
\end{corollary}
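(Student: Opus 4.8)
The plan is to follow the scheme of Corollary \ref{Cartan lck OT}, merely changing the source of the vanishing of proper complex submanifolds. First I would invoke Corollary \ref{Cor 1}: since every $\text{OT}$--manifold has algebraic dimension zero and carries its standard flat torsion-free affine connection $\nabla$, the corollary guarantees that the given holomorphic Cartan geometry $(E_H,\,\eta)$ of algebraic type is locally homogeneous on an open dense set $M\setminus S$, and that the intersection $\mathfrak{g}'$ of the Killing Lie algebra of $(E_H,\,\eta)$ with the Killing Lie algebra of $\nabla$ is transitive on $M\setminus S$, where $S$ is the nowhere dense analytic subset of positive codimension along which $\mathfrak{g}'$ drops rank. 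The whole task is then to upgrade transitivity on $M\setminus S$ to transitivity on all of $M$, that is, to show that $S\,=\,\emptyset$.

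The next step is to cut down the dimension of $S$. Under the standing hypothesis that every $u\,\in\, U\setminus\{1\}$ is a primitive element of $F$, by \cite[Theorem 1.1]{OVV} the manifold $M$ carries no nontrivial (positive-dimensional) complex subvarieties. Since each positive-dimensional irreducible component of the analytic set $S$ would be such a subvariety, $S$ must be zero-dimensional, hence a finite set of points.

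Finally, to rule out a nonempty finite $S$, I would argue exactly as in the proof of Theorem \ref{lck OT} by passing to the universal cover $\widetilde M\,\cong\,\mathbb H^s\times\mathbb C^t\,\subset\,\mathbb C^{s+t}$. The point that makes this work, and the only genuinely delicate point, is that $\mathfrak{g}'$ consists of \emph{affine} vector fields, precisely because we intersected with the Killing algebra of $\nabla$. By the extension theorem of Amores \cite{Am} the elements of $\mathfrak{g}'$ pull back to global holomorphic affine vector fields on $\widetilde M$, that is, to the restriction of a Lie subalgebra of $\mathfrak{gl}(s+t,\mathbb C)\ltimes\mathbb C^{s+t}$. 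Choosing a basis $k_1,\,\ldots,\,k_l$ of this subalgebra, the preimage $\widetilde S$ of $S$ is the common zero locus of the $k_i$, hence the intersection of $\mathbb H^s\times\mathbb C^t$ with a linear subvariety of $\mathbb C^{s+t}$, and in particular Zariski closed. On the other hand, $\pi_1(M)$ is infinite, so $\widetilde S$, being the preimage of a nonempty finite set under the covering map, is an infinite discrete subset of $\widetilde M$; an infinite discrete set cannot coincide with the intersection of $\mathbb H^s\times\mathbb C^t$ with a linear subvariety. This contradiction forces $S\,=\,\emptyset$, giving transitivity of $\mathfrak{g}'$ on all of $M$ and hence local homogeneity everywhere. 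The main obstacle is thus the verification that the relevant zero locus is genuinely algebraic in $\mathbb C^{s+t}$, which is exactly why one works with the affine subalgebra $\mathfrak{g}'$ rather than with the full Killing algebra of the Cartan geometry.
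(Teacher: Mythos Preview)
Your proof is correct and follows essentially the same route as the paper: invoke Corollary \ref{Cor 1} to obtain transitivity outside a positive-codimension analytic set $S$, use \cite[Theorem 1.1]{OVV} to force $S$ to be finite, and then conclude $S=\emptyset$ exactly as in the proof of Theorem \ref{lck OT}. The paper's version is terser, but the argument is the same.
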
 

\begin{proof} 
By Corollary \ref{Cor 1}, the intersection of the Killing Lie algebra of the Cartan geometry with the Killing Lie algebra of the 
affine structure is transitive away from the analytic subset of positive codimension $S$. By \cite[Theorem 1.1]{OVV} $M$ does not 
admit nontrivial complex submanifolds. Therefore $S$ is a finite union of points. We conclude as in Theorem \ref{lck OT} that $S$ is 
the empty set.
\end{proof}

\section*{Acknowledgements}

We thank the referee for showing how to correct an argument in the proof of Theorem \ref{thm 1 introduction}.
The authors are grateful to Vincent Pecastaing for helpful and friendly discussions and, in particular, for his suggestion to use his results about symmetries of extended Cartan 
geometries, \cite{Pe}, in the proof of Theorem \ref{alg dim 0} and its Corollary \ref{Cartan lck OT}. The authors are also grateful to Hisashi Kasuya,
Liviu Ornea, Misha Verbitsky and Victor Vuletescu for helpful comments. The first author is partially 
supported by a J. C. Bose Fellowship (JBR/2023/000003).

Declaration of interests. The authors do not work for, advise, own shares in, or receive
 funds from any organisation that could benefit from this article, and have declared no
affiliation other than their research organisations.

Data availability statement. No data were generated or used.

\end{document}